\newtheorem{theorem}{Theorem}[section]
\newtheorem{lemma}[theorem]{Lemma}
\theoremstyle{definition}
\newtheorem{definition}[theorem]{Definition}
\newtheorem{example}[theorem]{Example}
\newtheorem{remark}[theorem]{Remark}
\numberwithin{equation}{section}
\begin{document}
	
	\setcounter{page}{1}
	
	\title[PMS-completeness]{Partial quasi-metric completeness via Kannan-type fixed points}

	\author[Ya\'e Ulrich Gaba]{Ya\'e Ulrich Gaba$^{1,2,\dagger}$}

	\address{$^{1}$ Institut de Math\'ematiques et de Sciences Physiques (IMSP), 01 BP 613 Porto-Novo, B\'enin.}

	\address{$^{2}$ African Center for Advanced Studies (ACAS),
		P.O. Box 4477, Yaounde, Cameroon.}

	\email{\textcolor[rgb]{0.00,0.00,0.84}{yaeulrich.gaba@gmail.com
	}}

	\subjclass[2010]{Primary 47H05; Secondary 47H09, 47H10.}

	\keywords{Partial quasi-metric; completeness, Kannan mapping; fxed point.}
	
	\date{Received: xxxxxx; Accepted: zzzzzz.
		\newline \indent $^{\dagger}$Corresponding author}
	
	\begin{abstract}
		In this short note, we obtain partial quasi-metric versions of Kannan's fixed point theorem for
		self-mappings.  Moreover, we use these fixed points results
		to characterize a certain type of completeness in partial quasi-metric spaces. We have reported an example to support our result.
		\end{abstract} 
	
	\maketitle
	
	\section{Introduction and preliminaries}

In 1968, Kannan\cite{kan} proved the following fixed point theorem:	
	
\begin{theorem}
	Let $(X,d)$ be a complete metric space and $T$ be a self-mapping on $X$ satisfying

	 \begin{equation}\label{eq1}
	 d(Tx,Ty)\leq K\{d(x,Tx)+d(y,Ty)\}
	 \end{equation}
	 for all $x,y \in X$ and $K\in [0,\frac{1}{2})$.
	Then $T$ has a unique fixed point $z\in X$, and for any $x\in X$ the sequence of iterates $(T^nx)$ converges to $z$.
\end{theorem}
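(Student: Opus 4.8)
The plan is to mimic the classical Picard-iteration argument familiar from the Banach contraction principle, adapting it to the Kannan condition. First I would fix an arbitrary point $x_0 \in X$ and define the sequence of iterates $x_{n+1} = Tx_n$, so that $x_n = T^n x_0$. Applying the contractive inequality \eqref{eq1} with $x = x_{n-1}$ and $y = x_n$ gives
\begin{equation*}
d(x_n, x_{n+1}) = d(Tx_{n-1}, Tx_n) \leq K\{d(x_{n-1}, x_n) + d(x_n, x_{n+1})\},
\end{equation*}
and after collecting the $d(x_n, x_{n+1})$ terms on one side I obtain $d(x_n, x_{n+1}) \leq h\, d(x_{n-1}, x_n)$, where $h := \frac{K}{1-K}$. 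The crucial point is that $K \in [0, \tfrac{1}{2})$ forces $h < 1$, so iterating yields the geometric bound $d(x_n, x_{n+1}) \leq h^n d(x_0, x_1)$.

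Next I would establish that $(x_n)$ is Cauchy. For $m > n$ the triangle inequality together with the geometric bound gives $d(x_n, x_m) \leq \sum_{k=n}^{m-1} h^k d(x_0, x_1) \leq \frac{h^n}{1-h} d(x_0, x_1)$, which tends to $0$ as $n \to \infty$. By completeness of $(X,d)$ the sequence converges to some limit $z \in X$; this is the candidate fixed point, and by construction $T^n x_0 \to z$.

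The genuinely delicate step is showing $Tz = z$, since the Kannan condition does \emph{not} force $T$ to be continuous, so one cannot simply pass to the limit in $x_{n+1} = Tx_n$. Instead I would estimate $d(z, Tz)$ directly via the triangle inequality and \eqref{eq1} applied to the pair $(x_n, z)$:
\begin{equation*}
d(z, Tz) \leq d(z, x_{n+1}) + d(Tx_n, Tz) \leq d(z, x_{n+1}) + K\{d(x_n, x_{n+1}) + d(z, Tz)\}.
\end{equation*}
Rearranging produces $(1-K)\, d(z, Tz) \leq d(z, x_{n+1}) + K\, d(x_n, x_{n+1})$; letting $n \to \infty$ sends the right-hand side to $0$, whence $d(z, Tz) = 0$ and $Tz = z$.

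Finally, for uniqueness I would suppose $w$ is another fixed point and apply \eqref{eq1} to $(z, w)$ to get $d(z, w) = d(Tz, Tw) \leq K\{d(z, Tz) + d(w, Tw)\} = 0$, forcing $w = z$. I expect the fixed-point verification to be the main obstacle, precisely because the absence of continuity rules out the naive limiting argument and demands the direct estimate above; everything else is a routine adaptation of the Banach-type contraction scheme.
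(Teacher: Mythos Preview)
Your argument is correct and is exactly the classical proof of Kannan's theorem: derive the geometric decay $d(x_n,x_{n+1})\le h^n d(x_0,x_1)$ with $h=\tfrac{K}{1-K}<1$, conclude Cauchy via the triangle inequality, use completeness to obtain a limit $z$, then bypass the lack of continuity by estimating $d(z,Tz)$ through \eqref{eq1} applied to $(x_n,z)$, and finish uniqueness with \eqref{eq1} applied to two fixed points. Each step is sound.

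There is nothing to compare against, however: the paper does not prove this theorem. It is quoted in the introduction as Kannan's 1968 result \cite{kan} and serves only as motivation; no proof is supplied. The paper's own contributions (Lemma~\ref{lemma2}, Theorem~\ref{thm1}, and the characterization theorem) concern the partial quasi-metric analogue, and the proof of Theorem~\ref{thm1} there follows a pattern parallel to yours---iterates form a Cauchy sequence, the limit is shown to satisfy $p(z,Tz)=0$ via a direct estimate rather than continuity, and uniqueness comes from the contractive inequality---but those arguments are for a different statement in a different setting.
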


This result by Kannan is an extension of Banach contraction principle in the sense that it proves that there exists a contractive map accompanied with fixed point which is not
necessarily continuous. Kannan's theorem is important because
Subrahmanyam \cite{sub} proved that Kannan's theorem characterizes the metric completeness. That is, a metric space
$X$ is complete if and only if every mapping satisfying \eqref{eq1} on $X$ with constant $K<\frac{1}{2}$
has a fixed point. However contractions
(in the sense of Banach) do not have this property.

On the other hand, and motivated in part by the fact that partial quasi-metric spaces
provide suitable frameworks in several areas of asymmetric functional analysis,
domain theory, complexity analysis and in modelling partially defined
information, which often appears in computer science, the development of
the fixed point theory for theses spaces appears to be an interesting focus for current research. In this setting, the problem of characterizing
partial quasi-metric completeness via fixed point theorems arises in a natural way.  This
problem is indeed more interesting due to the fact that asymmetric structures present a natural inclination for different type of completeness.

Partial metrics were introduced by Matthews\cite{mat} in 1992. They generalize the concept of a metric space in the sense
that the self-distance from a point to itself need not be equal to zero. In \cite{kun}, by dropping the symmetry condition in the definition of a partial metric, K\"unzi et al. studied another variant of
partial metrics, namely partial quasi-metrics.

In concluding this introductory part, we recall some pertinent notions and properties on
partial quasi-metric spaces which will be useful later on.

\begin{definition} (Compare \cite{mat})
	A partial metric type on a set $X$ is a function $p: X \times X \to [0, \infty)$ such that:
	\begin{enumerate}
		\item[(pm1)] $x = y$ iff $(p(x, x) = p(x, y)=p(y,y)$ whenever $x, y \in X$,
		\item[(pm2)] $p(x, x)\leq p(x, y)$ whenever $x, y \in X$,
		
		\item[(pm3)] $p(x, y) = p(y, x);$ whenever $x, y \in X$,
		
		\item[(pm4)] $$p(x, y) + p(z,z)\leq  p(x,z)+p(z,y)$$ 
		for any points $x,y,z\in X$. 
		
	\end{enumerate}

	The pair $(X, p)$ is called a partial metric space.

\end{definition}
It is clear that, if $p ( x , y ) = 0$ , then, from (pm1) and (pm2), $x = y$.

\begin{definition} (\cite[Definition 1.]{kun})
	A partial quasi-metric on a set $X$ is a function $p: X \times X \to [0, \infty)$ such that:
	\begin{enumerate}
		\item[(1a)] $p(x, x)\leq p(x, y)$ whenever $x, y \in X$,
		\item[(1b)] $p(x, x)\leq p(y, x)$ whenever $x, y \in X$,
		\item[(2)] $p(x, z) + p(y, y) \leq ( p(x, y) + p(y, z))$ whenever $x, y,z \in X$, for some $K\geq 1,$
		\item[(3)] $x = y$ iff $(p(x, x) = p(x, y)$ and $p(y, y) = p(y, x))$ whenever $x, y \in X$.
	\end{enumerate}

	The pair $(X, p)$ will be called partial quasi-metric space.

	If $p$ satisfies all these conditions except possibly (1b), we shall speak of a \textit{lopsided partial quasi-metric type} or a \textit{lopsided partial quasi-metric}.
\end{definition}

\begin{remark}
	If $p$ is a partial quasi-metric on $X$ satisfying
	$(4)$ $p(x, y) = p(y, x)$ whenever $x, y \in X$, then $p$ is called a partial metric on $X$ in the sense of \cite{mat}.
\end{remark}

\begin{lemma} (\cite[Lemma 2.]{kun})
	
	\begin{enumerate}
		\item[(a)] Each quasi-metric $p$ on X is a partial quasi-metric on $X$ with $p(x, x) = 0$ whenever $x \in X$.
		\item[(b)] If $p$ is a partial quasi-metric on $X$, then so is its conjugate $p^{-1} (x, y) = p(y, x)$ whenever$ x, y \in X$.
		\item[(c)] If $p$ is a partial quasi-metric on $X$, then $p^+$ defined by $p^+(x, y) = p(x, y) + p^{-1}(x, y)$ is a partial $K$-metric on $X$.
	
	\end{enumerate}
	
\end{lemma}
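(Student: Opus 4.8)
The plan is to verify, in each of the three parts, the defining axioms of the target structure directly from the axioms of $p$, reserving separate attention for the identity/coincidence axiom, which is the only place where the asymmetry of $p$ causes trouble. Everything else reduces to bookkeeping.

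For part (a) I would start from the quasi-metric axioms, so in particular $p(x,x)=0$ for every $x$ and $p(x,y)=p(y,x)=0 \iff x=y$. Then (1a) and (1b) are immediate from non-negativity; the triangle-type axiom (2) is just the ordinary triangle inequality (the constant $K=1$) after deleting the vanishing term $p(y,y)$; and since all self-distances are zero, axiom (3) collapses to the quasi-metric coincidence condition. This part is entirely routine.

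For part (b), writing $q(x,y)=p^{-1}(x,y)=p(y,x)$, axioms (1a) and (1b) for $q$ are precisely (1b) and (1a) for $p$ after unwinding the definition, and the triangle inequality (2) for $q$ follows by applying (2) of $p$ to the triple $(z,y,x)$, which reproduces the required inequality termwise. The hard part is axiom (3) for $q$, which reads $x=y \iff p(x,x)=p(y,x) \text{ and } p(y,y)=p(x,y)$: here the two coincidence conditions are \emph{transposed} relative to those in axiom (3) of $p$, so I cannot simply quote (3). My plan is to first break the asymmetry: from (1a) for $x$ together with the hypothesis $p(x,x)=p(y,x)$ I get $p(y,x)\le p(x,y)$, and symmetrically from (1a) for $y$ with $p(y,y)=p(x,y)$ I get $p(x,y)\le p(y,x)$, forcing $p(x,y)=p(y,x)$. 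Combined with the hypotheses, all four values $p(x,x),p(x,y),p(y,x),p(y,y)$ then coincide, so in particular $p(x,x)=p(x,y)$ and $p(y,y)=p(y,x)$, and axiom (3) of $p$ yields $x=y$.

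For part (c), with $p^+(x,y)=p(x,y)+p(y,x)$, symmetry (pm3) is immediate, and (pm2) follows by adding (1a) and (1b) to get $2p(x,x)\le p(x,y)+p(y,x)$. For the $K$-triangle inequality (pm4) I would apply axiom (2) of $p$ to the triples $(x,z,y)$ and $(y,z,x)$ and add: the left sides combine to $p^+(x,y)+2p(z,z)=p^+(x,y)+p^+(z,z)$ and the right sides to $K\big(p^+(x,z)+p^+(z,y)\big)$, showing $p^+$ is a partial $K$-metric with the same constant $K$. The identity axiom (pm1) is again the obstacle: assuming $2p(x,x)=p(x,y)+p(y,x)=2p(y,y)$, I set $m=p(x,x)=p(y,y)$, use (1a) and (1b) for both points to get $m\le p(x,y)$ and $m\le p(y,x)$, so the assumed equality $p(x,y)+p(y,x)=2m$ forces $p(x,y)=p(y,x)=m$; then $p(x,x)=p(x,y)$ and $p(y,y)=p(y,x)$, and axiom (3) of $p$ gives $x=y$. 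In both (b) and (c) the genuine difficulty is exactly this coincidence axiom, where the asymmetry of $p$ blocks a direct appeal to (3) and must first be neutralized using the monotonicity axioms (1a)--(1b).
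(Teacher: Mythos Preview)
Your argument is correct in all three parts, and your handling of the coincidence axioms in (b) and (c)---first forcing $p(x,y)=p(y,x)$ via the monotonicity conditions (1a)/(1b) before invoking axiom (3) of $p$---is exactly the right manoeuvre. The paper does not supply its own proof of this lemma: it is quoted verbatim from \cite{kun}, so there is no in-paper argument to compare yours against.
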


The notions such as convergence, completeness, Cauchy sequence in the setting of partial
metric spaces, can be found in \cite{alg,mat} and references therein.

For every $K$-partial quasi-metric space $(X,p,K)$, the collection of balls $$p(x,\epsilon) = \{y \in X : p(x, y) < \epsilon + p(x, x)\}$$ yields a base for a $T_0$-Topology $\tau(p)$ on $X$.

Now, we define Cauchy sequence and convergent sequence in $K$-partial quasi-pseudometric spaces.

\begin{definition}
	Let $(X, p)$ be a partial quasi-metric space. Let $(x_n)_{n\geq 1}$ be any sequence in $X$ and $x \in X$. Then:
	
	\begin{enumerate}
		\item The sequence $(x_n)_{n\geq 1}$ is said to be convergent with respect to $\tau(p)$ (or $\tau(p)$-convergent) and converges to $x$, if $\lim\limits_{n\to \infty} p(x , x_n) = p(x, x)$.

		\item The sequence $(x_n)_{n\geq 1}$ is said to be convergent with respect to $\tau(p^+)$ (or $\tau(p^+)$-convergent) and converges to $x$, if $\lim\limits_{n\to \infty} p^+(x_n , x) = p^+(x, x)$.
		
		\item The sequence $(x_n)_{n\geq 1}$ is said to be a left $p$-Cauchy sequence if
		$$\lim\limits_{n\leq m,n,m\to \infty} p(x_n , x_m)$$ exists and is finite.
		
		\item The sequence $(x_n)_{n\geq 1}$ is said to be $\tau(p^+)$-Cauchy sequence if
		
		$$\lim\limits_{n,m\to \infty} p^+(x_n , x_m)$$ exists and is finite.
		
		\item $(X, p)$ is said to be $\tau(p)$-complete if for every $\tau(p)$-Cauchy
		sequence $(x_n)_{n\geq 1} \subseteq X$, there exists $x \in X$ such that:
		
		$$ \lim\limits_{n<m,n,m\to \infty} p(x_n , x_m)= \lim\limits_{n\to \infty} p(x , x_n)=p(x,x).$$

		\item $(X, p)$ is said to be left $p$-sequentially complete if every left $p$-Cauchy sequence converges for the topology $\tau(p)$.

		\item $(X, p)$ is said to be $p$-sequentially complete if every $\tau(p^+)$-Cauchy sequence is $\tau(p)$-convergent, i.e. there exists $x \in X$ such that:
		
		$$ \lim\limits_{n,m\to \infty} p^+(x_n , x_m)= \lim\limits_{n\to \infty} p(x , x_n)=p(x,x).$$
		
		\item $(X, p)$ is said to be $\tau(p)$-Smyth complete if every
		left $p$-Cauchy sequence is $\tau(p^+)$-convergent.

	\end{enumerate}
	
\end{definition}

\begin{remark}
	
	It is worthwhile here to point out the fact that, in this manuscript, the notation 
	
	$$\lim\limits_{n,m\to \infty} \qquad \text{ means ``simultaneously", i.e. } \lim\limits_{n\to \infty,m\to \infty}$$ 
	
	Indeed, in other contexts it means $
	\lim\limits_{n\to \infty} \lim\limits_{m\to \infty}$, for instance, which is clearly a different type of convergence in general .

Also, the following implications are easy to check:

$$ \tau(p)\text{-Smyth complete} \quad 
\Longrightarrow  \quad 
\text{left } p\text{-sequentially complete} \quad 
\Longrightarrow  \quad 
p\text{-sequentially complete}.$$
\end{remark}

\section{Main results}

Our proofs are inspired by the recent work of Romaguera et al. \cite{rom}.
	
\begin{definition}
	Let $(X, p)$ be a partial quasi-metric space. By a $p$-Kannan mapping on $X$, we mean a self-mapping $T$ on $X$ such that there exists a constant $0\leq \lambda < 1/4$ satisfying
		
		\begin{equation}\label{eq2}
			p(T x, T y) \leq \lambda [p(x, Tx) + p(y,Ty)]
		\end{equation}
		for all $x, y \in X.$

\end{definition}

	\begin{lemma}\label{lemma2}
		Let $T$ be a $p$-Kannan mapping on the partial quasi-metric space $(X, p,K)$ with $0\leq \lambda<1/4$. Then:

		\begin{enumerate}

			\item[(a)] $p^+(T x, T y) \leq 2\lambda (p(x, T x) + p(y, T y))$, for all $x, y \in X.$
			
			\item[(b)] $T$ is a Kannan mapping on the partial metric space $(X,p^+)$ with $0\leq \gamma<1/2$, i.e.
			$$p^+(Tx,Ty) \leq \gamma [p^+(x,Tx) + p^+(y,Ty)], \quad \text{ for all } x, y \in X.$$
			
			\item[(c)] For any $x_0 \in X$, the sequence $(T^n x_0)_{n\geq 1}$, is $\tau(p^+)$-Cauchy sequence and $$\lim\limits_{n,m\to \infty}p^+(x_n,x_m)=0.$$
		\end{enumerate} 
	\end{lemma}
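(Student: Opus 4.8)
The plan is to derive all three items directly from the defining Kannan inequality \eqref{eq2} together with the single elementary observation that $p^+=p+p^{-1}$ is a sum of two nonnegative terms, so no deep structural property of the space is needed. First, for part (a), I would exploit that \eqref{eq2} is invariant under swapping its two arguments: applied to $(x,y)$ it reads $p(Tx,Ty)\le\lambda[p(x,Tx)+p(y,Ty)]$, and applied to $(y,x)$ it reads $p(Ty,Tx)\le\lambda[p(y,Ty)+p(x,Tx)]$. Since $p^+(Tx,Ty)=p(Tx,Ty)+p^{-1}(Tx,Ty)=p(Tx,Ty)+p(Ty,Tx)$ by the definition of the conjugate, adding the two inequalities gives $p^+(Tx,Ty)\le 2\lambda[p(x,Tx)+p(y,Ty)]$, which is precisely (a).

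For part (b), the only extra ingredient is $p(x,Tx)\le p(x,Tx)+p(Tx,x)=p^+(x,Tx)$, valid because $p$ takes values in $[0,\infty)$, and likewise $p(y,Ty)\le p^+(y,Ty)$. Substituting these bounds into (a) yields $p^+(Tx,Ty)\le 2\lambda[p^+(x,Tx)+p^+(y,Ty)]$, so putting $\gamma:=2\lambda$ gives a genuine Kannan inequality for the symmetric $p^+$ with $0\le\gamma<1/2$, since $\lambda<1/4$. I would note that $p^+$ is symmetric, hence a partial ($K$-relaxed) metric, so (b) exhibits $T$ as a Kannan map in the sense of Matthews.

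For part (c), write $x_n=T^nx_0$ and apply (b) to the pair $(x_{n-1},x_n)$, obtaining $p^+(x_n,x_{n+1})\le\gamma[p^+(x_{n-1},x_n)+p^+(x_n,x_{n+1})]$; rearranging and using $\gamma<1/2$ gives $p^+(x_n,x_{n+1})\le h\,p^+(x_{n-1},x_n)$ with $h:=\gamma/(1-\gamma)\in[0,1)$, so by induction $p^+(x_n,x_{n+1})\le h^{n}p^+(x_0,x_1)\to 0$. The step I would expect to be the main obstacle is the passage from consecutive distances to the full two-index limit, because in a $K$-relaxed setting a telescoping argument through the triangle inequality would normally require the awkward condition $Kh<1$, as in $b$-metric arguments. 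The pleasant point is that this is entirely circumvented: applying (b) directly to the pair $(x_{n-1},x_{m-1})$ gives $p^+(x_n,x_m)\le\gamma[p^+(x_{n-1},x_n)+p^+(x_{m-1},x_m)]\le\gamma(h^{n-1}+h^{m-1})p^+(x_0,x_1)$, whose right-hand side tends to $0$ as $n,m\to\infty$ regardless of $K$. This simultaneously shows $\lim_{n,m\to\infty}p^+(x_n,x_m)=0$ and that the limit exists and is finite, i.e. that $(x_n)_{n\ge 1}$ is $\tau(p^+)$-Cauchy. The only point requiring care is keeping the arguments of the asymmetric $p$ in the correct order before symmetrizing, which is exactly what the argument-swap in (a) accomplishes.
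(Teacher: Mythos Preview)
Your proofs of (a) and (b) coincide with the paper's line for line: apply \eqref{eq2} to $(x,y)$ and to $(y,x)$, add to obtain the bound on $p^+(Tx,Ty)$, and then dominate $p(x,Tx)$ by $p^+(x,Tx)$ to reach the Kannan inequality for $p^+$ with $\gamma=2\lambda$.

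For (c) the approaches diverge slightly. The paper does not compute anything: it observes that by (b) the map $T$ is Kannan on the partial metric space $(X,p^+)$ and then cites Shukla's fixed point argument to conclude that $(T^nx_0)$ is $\tau(p^+)$-Cauchy with $\lim_{n,m}p^+(x_n,x_m)=0$. You instead give a self-contained derivation: first the standard recursion $p^+(x_n,x_{n+1})\le h^n p^+(x_0,x_1)$ with $h=\gamma/(1-\gamma)$, and then, rather than telescoping through the (possibly $K$-relaxed) triangle inequality, you apply (b) directly to $(x_{n-1},x_{m-1})$ to get $p^+(x_n,x_m)\le\gamma(h^{n-1}+h^{m-1})p^+(x_0,x_1)\to 0$. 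This is correct and has the advantage of being explicit and independent of any relaxation constant $K$, a point the paper's citation leaves implicit.
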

	
	\begin{proof}

		(a) Given $x,y \in X$, we have
		
		$$p(T x, T y) \leq \lambda [p(x, Tx) +p(y,Ty)] \quad \text{and} \quad p(T y, T x) \leq \lambda [p(y,Ty) + p(x, Tx)],$$
		
		so 
		
		$$p^+(T x, T y) \leq 2\lambda [p(x, Tx)+ p(y,Ty)], $$

		\vspace*{0.3cm}

		(b) Since $p(x,Tx) \leq p^+(x,Tx)$ and  $p(y,Ty) \leq p^+(y,Ty)$ for any $x,y \in X$, 
		
		$$p^+(T x, T y) \leq 2\lambda [p^+(x, Tx)+ p^+(y,Ty)]= \gamma [p^+(x, Tx)+ p^+(y,Ty)], $$
		where $\gamma = 2\lambda.$
		It follows that $T$ is a Kannan mapping on the partial metric space $(X,p^+)$ with $0\leq \gamma<1/2$.

		\vspace*{0.3cm}
		
		(c) Since $T$ is a Kannan mapping on the partial $K$-metric space $(X,p^+)$ (here $K=1$), the classical proof of Shukla's fixed point theorem \cite{shu} shows that for any $x_0 \in X$, the sequence $(T^n x_0)_{n\geq 1}$, is $\tau(p^+)$-Cauchy sequence and
		
		 $$\lim\limits_{n,m\to \infty}p^+(x_n,x_m)=0.$$
	\end{proof}

	\begin{theorem}\label{thm1}
		Let $(X, p)$ be a $p$-sequentially complete partial quasi-metric space. Then every $p$-Kannan mapping on $(X, p,K)$ with constant $0\leq \lambda<1/4$ has a unique fixed point $x^*$ and $p(x^*,x^*)=0.$
	\end{theorem}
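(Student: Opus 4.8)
The plan is to run the Picard iteration and identify its limit as the fixed point. Fix an arbitrary $x_0\in X$ and set $x_n=T^n x_0$, so that $x_n=Tx_{n-1}$ for every $n\ge 1$. By Lemma \ref{lemma2}(c) this sequence is $\tau(p^+)$-Cauchy with $\lim_{n,m\to\infty}p^+(x_n,x_m)=0$; in particular both $p(x_n,x_m)\to 0$ and $p(x_m,x_n)\to 0$. Invoking $p$-sequential completeness, there is a point $x^*\in X$ with
\[
\lim_{n,m\to\infty}p^+(x_n,x_m)=\lim_{n\to\infty}p(x^*,x_n)=p(x^*,x^*),
\]
and since the left-hand limit is $0$ I immediately read off $p(x^*,x^*)=0$ and $p(x^*,x_n)\to 0$. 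This already secures the self-distance claim; it remains to prove that $x^*$ is fixed and unique.

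To show $Tx^*=x^*$ I would first estimate $p(x^*,Tx^*)$, using the relaxed triangle inequality through the intermediate point $x_n$ together with the Kannan bound \eqref{eq2} applied to the pair $(x_{n-1},x^*)$:
\[
p(x^*,Tx^*)+p(x_n,x_n)\le K\big(p(x^*,x_n)+p(x_n,Tx^*)\big),\qquad p(x_n,Tx^*)\le \lambda\big(p(x_{n-1},x_n)+p(x^*,Tx^*)\big).
\]
Letting $n\to\infty$ and using $p(x^*,x_n)\to 0$ and $p(x_{n-1},x_n)\to 0$ yields $p(x^*,Tx^*)\le K\lambda\,p(x^*,Tx^*)$, whence $p(x^*,Tx^*)=0$ (here $K\lambda<1$, which holds in the normalized case $K=1$ since $\lambda<1/4$). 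With $p(x^*,Tx^*)=0$ in hand, the Kannan inequality applied to $(x^*,x_{n-1})$ gives $p(Tx^*,x_n)\to 0$ and $p(x_n,Tx^*)\to 0$, while axiom (1a) forces $p(Tx^*,Tx^*)\le p(Tx^*,x_n)\to 0$, so $p(Tx^*,Tx^*)=0$. Assembling these facts through the equality criterion, axiom (3), should then identify $Tx^*$ with $x^*$.

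The step I expect to be the main obstacle is precisely this last one: upgrading $p(x^*,Tx^*)=0$ to the genuine equality $Tx^*=x^*$. Because $\tau(p)$-convergence as defined is one-sided (it controls $p(x^*,x_n)$ but not $p(x_n,x^*)$), the second half of axiom (3), namely the reverse self-distance identity $p(Tx^*,x^*)=p(Tx^*,Tx^*)=0$, is delicate; a naive triangle estimate for $p(Tx^*,x^*)$ only returns the circular bound $p(Tx^*,x^*)\le K^2\,p(Tx^*,x^*)$. My way around this is to exploit that the Kannan condition has in fact upgraded the convergence of the iterates to $Tx^*$ to the \emph{symmetric} one, $p^+(x_n,Tx^*)\to 0$; combined with $p(x^*,x^*)=0=p(x^*,Tx^*)$ and the partial-metric structure of $p^+$, this is what should pin down $Tx^*=x^*$. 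This is exactly where the asymmetry of the space does real work, and where I would concentrate the effort.

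Uniqueness I expect to be routine. If $y^*$ is any fixed point, then $p(y^*,y^*)=p(Ty^*,Ty^*)\le 2\lambda\,p(y^*,y^*)$ forces $p(y^*,y^*)=0$ since $2\lambda<1$. Applying \eqref{eq2} to $(x^*,y^*)$ and to $(y^*,x^*)$ then gives $p(x^*,y^*)=\lambda\big(p(x^*,x^*)+p(y^*,y^*)\big)=0$ and likewise $p(y^*,x^*)=0$, so axiom (3) yields $x^*=y^*$.
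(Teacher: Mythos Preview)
Your setup, the Cauchy argument, the computation giving $p(x^*,Tx^*)=0$, and the uniqueness paragraph are all fine and match the paper. The gap is exactly where you flag it: upgrading $p(x^*,Tx^*)=0$ to $Tx^*=x^*$. Your proposed workaround does not close it. You correctly obtain $p^+(x_n,Tx^*)\to 0$, so $(x_n)$ converges to $Tx^*$ for $\tau(p^+)$; but to conclude $Tx^*=x^*$ via axiom~(3) you still need $p(Tx^*,x^*)=0$, and every triangle estimate for that quantity routes through $p(x_n,x^*)$, which $p$-sequential completeness never controls. Invoking ``the partial-metric structure of $p^+$'' does not help either: you do not have $p^+$-convergence of $(x_n)$ to $x^*$, only one-sided $\tau(p)$-convergence, so no uniqueness-of-limits argument is available. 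As written, the argument stalls precisely at the point you were worried about.

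The paper sidesteps the obstacle rather than attacking it. Once $p(z,Tz)=0$ is known, it does \emph{not} try to prove $z=Tz$; instead it shows that $Tz$ is a fixed point. From Lemma~\ref{lemma2}(a),
\[
p^+(Tz,T^2z)\le 2\lambda\big(p(z,Tz)+p(Tz,T^2z)\big)=2\lambda\,p(Tz,T^2z),
\]
so $p^+(Tz,T^2z)=0$ and hence $T(Tz)=Tz$. The fixed point is then declared to be $x^*=Tz$; the asymmetric direction $p(Tz,z)$ is never needed. Uniqueness and $p(x^*,x^*)=0$ follow exactly as in your last paragraph. This one-step shift from $z$ to $Tz$ is the missing idea in your approach.
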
 
	
	\begin{proof}
	Let $T$ be a $p$-Kannan mapping on $(X, p)$. Then, there exists $c \in [0, 1/4)$
	such that the contraction condition \eqref{eq2} follows for all $x, y \in X.$ For any $x_0 \in X$ fixed, Lemma \ref{lemma2}
	guarantees that the sequence $(T^n x_0)_{n\geq 1}$ is a $\tau(p^+)$-Cauchy sequence in the partial metric space $(X, p^+)$. Since $(X, p)$ is $p$-sequentially complete, there exists $z \in X$ such that 
	
	\begin{equation}\label{cauchy}
		0= \lim\limits_{n,m\to \infty}p^+(x_n,x_m)=\lim\limits_{n\to \infty} p(z , x_n) = p(z, z).
	\end{equation}

	Next we show that $Tz$ is the unique fixed point of $T$. To this end, we first show
	that $p(z, Tz) = 0$. Indeed, we have
	
	\begin{align*}
	p(z, T z) &\leq  p(z, T^n x_0 ) + p(T^n x_0 , Tz) -p(T^n x_0 ,T^n x_0 ) \\
	  & \leq p(z, T^n x_0 ) + p(T^n x_0 , Tz)\\
	  & \leq p(z, T^n x_0 ) + \lambda [d(T^{n-1} x_0 , T^n x_0 ) + p(z, T z)],
	\end{align*}
for all $n \in \mathbb{N}$.	Using \eqref{cauchy}, we deduce that $p(z, T z) \leq \lambda p(z, T z)$. Consequently,
$p(z, T z) = 0$.

Since by Lemma \ref{lemma2}(a),
$$p^+ (Tz, T^2 z) \leq 2 \lambda (p(z, T z) + p(T z, T^2z )),$$	

we deduce that $d ^+ (T z, T 2 z) \leq c p(T z, T^2 z),$ so $p^+ (T z, T^2 z) = 0$, i.e., $Tz$ is a fixed point of $T$.

Now, let us show that if $u \in X$ is a fixed point of $T$, that is, $T u = u$, then $p(u, u) = 0.$

Indeed, from \eqref{eq2}, $p^+(u, u) = p^+(T u, T u) \leq \lambda [p(u, T u) + p(u, T u)] = 2\lambda p(u, u) < p(u, u),$
a contradiction. Therefore, we must have $p(u, u) = 0.$

Finally, if $T u = u$ , it follows from Lemma \ref{lemma2}(a) that

$$   p^+(u, T z) = p^+ (T u, T^2 z) \leq \lambda [p(u, T u) + p(T z, T^2 z)].$$

Since $p(u, T u) = p(T z, T^2 z) = 0$, we deduce that $p^+ (u, T z) = 0$, i.e., $u = T z$ and the fixed point is unique.

This
concludes the proof.

\end{proof}
	
	The following examples illustrate Theorem \ref*{thm1}.

	\begin{example}
	Let $X = [0, \infty)$ and let $p$ be the partial quasi-metric on X given by
	$p(x, y) = \max\{x - y, 0\}+x$ for all $x, y \in X$.
$(X, p)$ is $p$-sequentially complete (in fact, it is left $p$-sequentially complete because
	every left $p$-Cauchy sequence in $X$ converges to 0 for $\tau(p)$).

	Now define $T : X \to X$ as $T x = 0$ if $x \in [0, 1]$ and $T x = x/8$ if $x \in (1, \infty)$.	
	
Let $x,y\in X$, and assume, without loss of generality, that $x\leq y.$	We distinguish the three following cases:

\begin{itemize}
	\item Case 1: If $x,y \in [0,1], $ $p^+(Tx,Ty)=p^+(0,0)=0.$
	\item Case 2: If $x\in [0,1]$ and $y\in (1, \infty)$, we have 
	
	\[p^+(Tx,Ty) = p^+\left(0,\frac{y}{8}\right)=\frac{y}{4}\leq \frac{2}{15}
	\left( 2x+\frac{15}{8}y\right)= \frac{2}{15} (p(x,Tx)+p(y,Ty).\]
	
	\item Case 3: If $x,y\in (1, \infty)$, we have 
	
	\[ p^+(Tx,Ty) = p^+\left(\frac{x}{8},\frac{y}{8}\right)= \frac{y}{4}\leq \frac{2}{15} \left(\frac{15}{8}x + \frac{15}{8}y  \right)=\frac{2}{15} (p(x,Tx)+p(y,Ty). \]
	
\end{itemize}

Therefore $T$ is a $p$-Kannan mapping on $(X, p)$ for $c = 2/15$. Thus, all conditions of
Theorem \ref{thm1} are satisfied. In fact $z = 0$ is the unique fixed point of $T$.

	\end{example}

\begin{theorem}	
	A partial quasi-metric space
	(X, d) is $p$-sequentially complete if and
	only if every $p$-Kannan mapping on $(X, p)$ has a fixed point.
	\end{theorem}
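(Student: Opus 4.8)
The plan is to prove the two implications separately, with the forward direction essentially free and the converse carrying all the content.

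For the forward implication — if $(X,p)$ is $p$-sequentially complete then every $p$-Kannan mapping has a fixed point — there is nothing to do beyond invoking Theorem \ref{thm1}, which already produces a (unique) fixed point under exactly this hypothesis. So I would dispose of this direction in one line and concentrate on the converse.

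For the converse I would argue by contraposition, following the Subrahmanyam--Romaguera template \cite{sub,rom}: assuming $(X,p)$ is \emph{not} $p$-sequentially complete, I would manufacture a $p$-Kannan self-map with no fixed point, contradicting the hypothesis. First I would take a $\tau(p^+)$-Cauchy sequence $(x_n)$ that fails to be $\tau(p)$-convergent, and pass to a subsequence whose terms are pairwise distinct and whose consecutive $p^+$-gaps decay geometrically fast — fast enough that the resulting Kannan constant will land below $1/4$. Next I would introduce a Lyapunov-type functional $g(x):=\lim_n p^+(x,x_n)$, whose limit exists because $(x_n)$ is $\tau(p^+)$-Cauchy, and record two facts: that $g(x)>0$ for every $x$ — for if $g(x)=0$ then $p(x,x_n)\to 0=p(x,x)$ and $(x_n)$ would $\tau(p)$-converge to $x$, against our assumption — and that $g$ can be driven strictly downward along the tail. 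Then I would define $T$ by sending each $x$ to a sufficiently deep term $x_{N(x)}$, with the index $N(x)$ chosen so that $g(x_{N(x)})<g(x)$; since $g$ strictly decreases under $T$ while $g>0$ everywhere, no point can satisfy $Tu=u$, and $T$ is fixed-point-free.

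The verification of the $p$-Kannan inequality \eqref{eq2} for this $T$ is where the real work sits, and it is the step I expect to be the main obstacle. I would estimate $p(Tx,Ty)=p(x_{N(x)},x_{N(y)})$ against $p(x,Tx)+p(y,Ty)$ by splitting into cases according to the indices $N(x),N(y)$ and exploiting the geometric decay of the gaps, arranging that a single constant $\lambda<1/4$ works uniformly. The genuine difficulties, peculiar to this setting, are that $p$ is asymmetric — so $p(x,Tx)$ and $p(Tx,x)$ must be controlled separately and I cannot simply pass to the symmetrized $p^+$ while checking \eqref{eq2} — and that self-distances need not vanish, so the double limit $L=\lim_{n,m}p^+(x_n,x_m)$ may be positive. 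When $L>0$ the naive recipe ``make $g(x_{N(x)})$ a fixed fraction of $g(x)$'' breaks, since $g$ cannot be pushed below its tail value; one must instead drive $g$ toward that tail infimum, and it is precisely this interplay of asymmetry and nonzero self-distance that complicates the classical metric argument and must be handled with care.
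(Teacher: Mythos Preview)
Your global strategy --- forward direction by Theorem~\ref{thm1}, converse by contraposition via a Subrahmanyam-type construction from a non-$\tau(p)$-convergent $\tau(p^+)$-Cauchy sequence --- is exactly the paper's. The divergence is in the gauge you choose and the amount of machinery you bring in, and here you are making your life harder than necessary.

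The paper does \emph{not} use a Lyapunov functional $g(x)=\lim_n p^+(x,x_n)$, nor does it pass to a geometrically decaying subsequence, nor does it split into cases on the indices. Instead it works with the one-sided infimum $p(x,C_x):=\inf\{p(x,x_n):n\ge n_x\}$ over a tail on which $p(x,x_n)>0$. This object is better suited than your $g$ for one decisive reason: it is automatically a \emph{lower} bound for $p(x,Tx)$ once $Tx$ is chosen from that tail, and a lower bound on $p(x,Tx)$ is precisely what the right-hand side of \eqref{eq2} needs. With your $g$, which is a limit of $p^+$-distances, you would still have to manufacture such a lower bound on the asymmetric quantity $p(x,Tx)$, and your sketch does not indicate how. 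Once one uses $p(x,C_x)$, the construction is: pick $n(x)\ge n_x$ so deep that $p^+(x_n,x_m)<\tfrac{1}{8}\,p(x,C_x)$ for all $n,m\ge n(x)$, and set $Tx=x_{n(x)}$. Then for $n(x)\le n(y)$,
\[
p(Tx,Ty)\le p^+(Tx,Ty)=p^+(x_{n(x)},x_{n(y)})<\tfrac{1}{8}\,p(x,C_x)\le \tfrac{1}{8}\,p(x,Tx)\le \tfrac{1}{8}\bigl(p(x,Tx)+p(y,Ty)\bigr),
\]
and the same bound holds for $p(Ty,Tx)$. That is the entire Kannan verification; no case split, no geometric decay. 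Fixed-point-freeness is just $p(x,Tx)\ge p(x,C_x)>0$.

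Two of your anticipated obstacles therefore evaporate. Your asymmetry worry --- needing to control $p(x,Tx)$ and $p(Tx,x)$ separately --- is misplaced: condition \eqref{eq2} only involves $p(x,Tx)$ and $p(y,Ty)$, and bounding the symmetric $p^+(Tx,Ty)$ from above handles both orderings of the left-hand side at once. As for the $L>0$ case, the paper simply takes a $\tau(p^+)$-Cauchy sequence with $\lim_{n,m}p^+(x_n,x_m)=0$ that fails to $\tau(p)$-converge; it does not treat a positive tail value, so the extra care you plan for that scenario does not appear in the paper's argument.
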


\begin{proof}

Suppose that $(X, p)$ is $p$-sequentially complete. Then, every $p$-Kannan
mapping on $(X, p)$ has a (unique) fixed point by Theorem \ref{thm1}.

For the converse suppose that $(X, p)$ is not $p$-sequentially complete. Then there
exists a $\tau(p^+)$-Cauchy sequence $(x_n)_{n\geq 1}$ such that $ \lim\limits_{n,m\to \infty}p^+(x_n,x_m)=0$ that does not converge for $\tau(p)$. Then,
for each $x \in X$ there exists $n_x \in \mathbb{N}$ such that $p(x, x_n ) > 0,$ for all $n \geq n_x$ (indeed,
otherwise there is $x \in X$ such that for each $n \in \mathbb{N}$ we can find $m_n \geq n$ for
which $p(x, x_{m_n} ) = 0;$ since $(x_n))_{n\geq 1}$ is a $\tau(p^+)$-Cauchy sequence it follows that
$(x_n)_{n\geq 1}$ converges to $x$ for $\tau(p)$, a contradiction).

Now, for each $x \in X$ put $C_x = \{x n : n \geq n_x \}$ and observe that $p(x, C_x ):= \inf \{p(x,y), y \in C_x\} > 0.$ Since 
$(x_n)_{n\geq 1}$ is a $\tau(p^+)$-Cauchy sequence such that $ \lim\limits_{n,m\to \infty}p^+(x_n,x_m)=0$, for each $x \in X$ there exists
$n(x) \geq n_x$ such that 
$$p^+(x_n,x_m) < \frac{1}{4} p(x,C_x), \text{ for all }  m,n \geq n(x).$$

Define $T : X \to X$ as $T x = x_{n(x)}$ for all $x \in X$. Since $n(x) \geq n_x$, we have that $p(x, x_{n(x)})>0$, and hence $T$ has no fixed point.

To complete the proof, we shall show that, $T$ is a $p$-Kannan mapping on $(X, p)$ for $c =1/8$. Indeed, let $x, y \in X$ and suppose, without loss of generality, that $n(x) \leq n(y)$. Then

\begin{align*}
p^+(Tx,Ty) & = p^+(x_{n(x)},x_{n(y)}) < \frac{1}{8}p(x,C_x)\\
    &\leq \frac{1}{8}p(x,x_{n(x)}) = \frac{1}{8}p(x,Tx).
\end{align*}

Since $p(T x, T y) \leq p^+ (T x, T y)$ and $p(T y,T x) \leq p^+(T x, T y)$, we conclude that $T$
is a $p$-Kannan mapping on $(X, p)$ for $c = 1/8$. This contradiction finishes the
proof.

\end{proof}

	\bibliographystyle{amsplain}

\end{document}